\newtheorem{theorem}{Theorem}
\newtheorem{lemma}[theorem]{Lemma}
\newtheorem{corollary}[theorem]{Corollary}
\DeclareMathOperator{\Mod}{{\rm mod}\,}
\begin{document}
	
	\title{Refined Upper Bounds for $L(1,\chi)$}
	\author{Jeffery Ezearn}
	\address{Department of Mathematics, College of Science, Kwame Nkrumah University of Science and Technology, Kumasi, Ghana}
	\email{jezearn@knust.edu.gh}
	\subjclass{11M20}
	\keywords{Dirichlet character, Dirichlet \textit{L}-function}
	\maketitle
	
	\begin{abstract}
	Let $\chi$ be a non-principal Dirichlet character of modulus $q$ with associated \textit{L}-function $L(s,\chi)$. We prove that
	$$|L(1,\chi)|\le\left(\frac{1}{2}+O\Big(\frac{\log\log q}{\log q}\Big)\right)\frac{\varphi(q)}{q}\log q\,,$$
	where $\varphi(\cdot)$ is Euler's phi function. This refines known bounds of the form $
	(c+o(1))\log q $ or $(c+O(\frac{1}{\log q}))\log q $ and is relevant for prime-rich moduli. It follows from Mertens' third theorem and the prime number theorem that $\inf_{q>2}\max_{\chi\ne\chi_0\,(\Mod q)}\frac{|L(1,\chi)|}{\log q/\log\log q}\le\frac{1}{2}e^{-\gamma}$.
\end{abstract}

\section{Introduction}
	Throughout, $\chi(\cdot)$ is a Dirichlet character of modulus $q$ with associated \textit{L}-series $L(s,\chi):=\sum_{n\ge1}\frac{\chi(n)}{n^s}$ and, for distinction, we let $\chi_0$ be the principal character. The focus of this paper is the following theorem.
	
	\begin{theorem}
		\label{L(1,chi) Refined}
		For all non-principal Dirichlet characters $\chi$ modulo $q$,
		$$|L(1,\chi)|\le\left(\frac{1}{2}+O\Big(\frac{\log\log q}{\log q}\Big)\right)\frac{\varphi(q)}{q}\log q\,,$$
		where the implied constant is independent of $q$; consequently,
		$$\liminf_{q\to\infty}\max_{\chi\ne\chi_0(\Mod q)}\frac{|L(1,\chi)|}{\log q/\log\log q}\le\frac{1}{2}e^{-\gamma}\,.$$
	\end{theorem}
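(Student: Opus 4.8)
The plan is to estimate $L(1,\chi)=\sum_{n\ge1}\chi(n)/n$ by truncating the Dirichlet series at a threshold $x$ just above $\sqrt q$ and bounding the two pieces separately. The source of the constant $\tfrac12$ is precisely this choice of threshold: bounding the head $\sum_{n\le x}\chi(n)/n$ by absolute values costs $\sum_{n\le x,\,(n,q)=1}1/n\approx\frac{\varphi(q)}{q}\log x$, and with $\log x\approx\tfrac12\log q$ this yields $\tfrac12\frac{\varphi(q)}{q}\log q$ rather than the $\frac{\varphi(q)}{q}\log q$ that the classical truncation at $x=q$ produces. The price is that the tail $\sum_{n>x}\chi(n)/n$ must now be controlled by genuine cancellation, i.e.\ by the Pólya--Vinogradov inequality, since the trivial periodicity bound $|\sum_{n\le t}\chi(n)|\le\varphi(q)$ is far too weak once $x$ is as small as $\sqrt q$.

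Concretely I would take $x=\sqrt q\,(\log q)^2$ and write $|L(1,\chi)|\le H+T$ with $H=\sum_{n\le x,\,(n,q)=1}1/n$ and $T=\big|\sum_{n>x}\chi(n)/n\big|$. For the head, Möbius inversion over $\mathrm{rad}(q)$ gives the sieved Mertens estimate
$$H=\frac{\varphi(q)}{q}\Big(\log x+\gamma+\sum_{p\mid q}\frac{\log p}{p-1}\Big)+O\!\Big(\frac{2^{\omega(q)}}{x}\Big),$$
and since $\log x=\tfrac12\log q+2\log\log q$ the main term is $\tfrac12\frac{\varphi(q)}{q}\log q$ plus a correction governed by $\sum_{p\mid q}\frac{\log p}{p-1}$. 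Two elementary uniform bounds then close the head: splitting the primes dividing $q$ at $\log q$ and invoking Mertens gives $\sum_{p\mid q}\frac{\log p}{p-1}=O(\log\log q)$, while the standard estimate $q/\varphi(q)=O(\log\log q)$ lets every additive $O(1)$-type error be reabsorbed as a multiplicative $O(\log\log q/\log q)$ relative to $\tfrac12\frac{\varphi(q)}{q}\log q$.

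For the tail I would use partial summation against $A(t)=\sum_{n\le t}\chi(n)$, which yields $T\ll x^{-1}\sup_{t\ge x}|A(t)|$, and then invoke $|A(t)|\ll\sqrt q\log q$; with $x=\sqrt q(\log q)^2$ this gives $T=O(1/\log q)$, again absorbable. The one point requiring care --- and the step I expect to be the main obstacle --- is that Pólya--Vinogradov is usually stated for \emph{primitive} characters, whereas the theorem is asserted for all non-principal $\chi$. I would treat an imprimitive $\chi$ induced by the primitive $\chi^\ast\pmod d$, $d\mid q$, by writing $A(t)=\sum_{e\mid q'}\mu(e)\chi^\ast(e)A^\ast(t/e)$ with $q'=\prod_{p\mid q,\,p\nmid d}p$ and $|A^\ast|\le\sqrt d\log d$, so that $|A(t)|\le 2^{\omega(q')}\sqrt d\log d$; the elementary inequality $\sum_{p\mid q'}\big(\log2-\tfrac12\log p\big)=O(1)$ (only $p=2,3$ contribute positively) shows $2^{\omega(q')}\ll\sqrt{q'}\le\sqrt{q/d}$, whence $|A(t)|\ll\sqrt q\log q$ for every non-principal $\chi$ and the clean bound is restored uniformly. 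Combining $H$ and $T$ then gives the stated inequality.

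Finally, the liminf consequence is immediate. Since the inequality holds for every non-principal $\chi$ it holds for the maximum, so after dividing by $\log q/\log\log q$ it suffices to evaluate $\frac{\varphi(q)}{q}\log\log q$ along a convenient sequence. Taking $q=\prod_{p\le y}p$, Mertens' third theorem gives $\frac{\varphi(q)}{q}\sim e^{-\gamma}/\log y$, while the prime number theorem in the form $\theta(y)\sim y$ gives $\log q\sim y$ and hence $\log\log q\sim\log y$; thus $\frac{\varphi(q)}{q}\log\log q\to e^{-\gamma}$ and the error term tends to $0$. Consequently the upper bound for $\max_{\chi\ne\chi_0}\frac{|L(1,\chi)|}{\log q/\log\log q}$ tends to $\tfrac12 e^{-\gamma}$ along this sequence, and since the $\liminf$ over all $q$ is dominated by any subsequential value, we obtain $\liminf_{q\to\infty}\max_{\chi\ne\chi_0}\frac{|L(1,\chi)|}{\log q/\log\log q}\le\tfrac12 e^{-\gamma}$.
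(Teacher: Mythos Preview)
Your proposal is correct and follows essentially the same route as the paper: split $L(1,\chi)$ at $x\approx\sqrt{q}$, bound the head by the sieved harmonic sum of Lemma~\ref{Principal L(1,X)} (including the estimate $\sum_{p\mid q}\frac{\log p}{p-1}=O(\log\log q)$), bound the tail via partial summation and P\'olya--Vinogradov, absorb the resulting $O(1)$ using $q/\varphi(q)=O(\log\log q)$, and deduce the liminf by specializing to primorials with Mertens' third theorem and the prime number theorem. The only cosmetic differences are your cutoff $x=\sqrt{q}(\log q)^2$ versus the paper's $x=\sqrt{q}\log q$, and your explicit reduction of P\'olya--Vinogradov to the primitive case, which the paper simply cites as known for all non-principal characters.
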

	\noindent
	Here, $\varphi(\cdot)$ is Euler's totient function and $\gamma$ is Euler's constant satisfying $\sum_{1\le k\le x}\frac{1}{k}=\log x+\gamma+O(\frac{1}{x})$. The above theorem refines well-known best bounds of the form $(c+o(1))\log q$, known uniformly for $c=\frac{1}{2}$ and for various values of $c\ge\frac{1}{2}(1-\frac{1}{\sqrt{e}}) $ depending on the prime factorisation of $q$ or the nature\textemdash primitivity, quadraticity\textemdash of $\chi$\footnote{Indeed, as our proof shows, the $\frac{1}{2}$ in Theorem \ref{L(1,chi) Refined} could be replaced accordingly with the known constants of $c$ in these specialisations} (\cite{Chowla,Burgess,Stephens,Pintz,Granville-Soundararajan}). For prime-rich moduli such that $\frac{\varphi(q)}{q}\log\log q=O(1)$, this refinement even extends to the known upper bounds of the form $(c+O(\frac{1}{\log q}))\log q$ (\cite{Louboutin-1,Louboutin-2, Platt-Eddin, Ramare-1, Ramare-2}).
	
	Our proof of Theorem \ref{L(1,chi) Refined} requires the following easy lemma, along with the Polya-Vinogradov inequality, Mertens' theorems, and the prime number theorem. Let $\tau(\cdot)$ be the number of divisors function. Throughout, $\sum_p$ and $\prod_p$ signifies evaluations over distinct prime numbers $p$. 
	
	\begin{lemma}
		\label{Principal L(1,X)}
		For all $x\ge1$, the following identity holds.
		$$\sum_{\substack{1\le n\le x\\\gcd(n,q)=1}}\frac{1}{n}=\frac{\varphi(q)}{q}\left(\log x+\sum_{p|q}\frac{\log p}{p-1}+\gamma\right)+O\Big(\frac{\tau(q)}{x}\Big)\,,$$
		where the implied constant is independent of $q$ or $x$. Furthermore,
		$$\sum_{p|q}\frac{\log p}{p-1}\le\log\log\Big(\prod_{p|q}p\Big)+O(1)\,,$$
		where the implied constant is independent of $q$.
	\end{lemma}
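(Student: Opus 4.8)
\medskip

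\noindent\textbf{Proof plan.} The two assertions are handled separately. The first follows from Möbius inversion over the divisors of $q$ together with the elementary estimate $\sum_{1\le m\le y}\frac1m=\log y+\gamma+O(1/y)$; the second from splitting the sum at a cutoff chosen in terms of $\log\big(\prod_{p|q}p\big)$ and invoking Mertens' first theorem, $\sum_{p\le y}\frac{\log p}{p}=\log y+O(1)$.

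For the identity, I would begin with
$$\sum_{\substack{1\le n\le x\\\gcd(n,q)=1}}\frac1n=\sum_{d|q}\mu(d)\sum_{\substack{1\le n\le x\\ d|n}}\frac1n=\sum_{\substack{d|q\\ d\le x}}\frac{\mu(d)}{d}\sum_{1\le m\le x/d}\frac1m=\sum_{\substack{d|q\\ d\le x}}\frac{\mu(d)}{d}\Big(\log\tfrac{x}{d}+\gamma+O\big(\tfrac{d}{x}\big)\Big),$$
and then pass to the sum over all $d|q$ at the cost of an error $O(2^{\omega(q)}/x)=O(\tau(q)/x)$: each term with $d>x$ is $O(1/x)$ (using $\tfrac{x}{d}\log\tfrac{d}{x}\le e^{-1}$ and $\tfrac1d<\tfrac1x$), the $O(d/x)$ terms over $d\le x$ likewise sum to $O(\tau(q)/x)$, and there are at most $2^{\omega(q)}\le\tau(q)$ squarefree divisors of $q$. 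Since $\sum_{d|q}\frac{\mu(d)}{d}=\prod_{p|q}(1-\tfrac1p)=\tfrac{\varphi(q)}{q}$, the main term equals $\tfrac{\varphi(q)}{q}(\log x+\gamma)-\sum_{d|q}\tfrac{\mu(d)\log d}{d}$, so it remains to evaluate the last sum. For this I would differentiate $f(s):=\sum_{d|q}\mu(d)d^{-s}=\prod_{p|q}(1-p^{-s})$ and compare the two resulting formulas for $f'(1)$:
$$-\sum_{d|q}\frac{\mu(d)\log d}{d}=f'(1)=f(1)\sum_{p|q}\frac{(\log p)/p}{1-1/p}=\frac{\varphi(q)}{q}\sum_{p|q}\frac{\log p}{p-1},$$
which is precisely the constant appearing in the statement.

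For the inequality, write $P:=\prod_{p|q}p$ and use $\tfrac{\log p}{p-1}=\tfrac{\log p}{p}+\tfrac{\log p}{p(p-1)}$, so that $\sum_{p|q}\tfrac{\log p}{p-1}=\sum_{p|q}\tfrac{\log p}{p}+O(1)$ because $\sum_p\tfrac{\log p}{p(p-1)}$ converges. For any $y\ge2$,
$$\sum_{p|q}\frac{\log p}{p}=\sum_{\substack{p|q\\ p\le y}}\frac{\log p}{p}+\sum_{\substack{p|q\\ p>y}}\frac{\log p}{p}\le\sum_{p\le y}\frac{\log p}{p}+\frac1y\sum_{p|q}\log p=\log y+\frac{\log P}{y}+O(1)$$
by Mertens' first theorem, and taking $y=\max(\log P,2)$ yields $\sum_{p|q}\tfrac{\log p}{p}\le\log\log P+O(1)$, the finitely many moduli with $\log P<2$ being absorbed into the error term. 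Only the elementary Mertens estimate is needed here, not the prime number theorem.

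Both parts are routine once these pieces are in place; the one genuinely substantive step is the Euler-product identification $-\sum_{d|q}\tfrac{\mu(d)\log d}{d}=\tfrac{\varphi(q)}{q}\sum_{p|q}\tfrac{\log p}{p-1}$, and the only place that calls for care is controlling the tail $d>x$ in the Möbius sum so that the error stays $O(\tau(q)/x)$ uniformly in $q$ and in $x\ge1$.
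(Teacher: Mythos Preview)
Your proof is correct and follows essentially the same route as the paper: M\"{o}bius inversion over $d\mid q$ combined with $\sum_{m\le y}\tfrac1m=\log y+\gamma+O(1/y)$ for the identity, and a split at height $\log P$ together with Mertens' theorem for the inequality. The only noteworthy differences are cosmetic: you reduce $\tfrac{\log p}{p-1}$ to $\tfrac{\log p}{p}$ before splitting (the paper splits $\tfrac{\log p}{p-1}$ directly), and you are more careful than the paper in separating the range $d>x$ from $d\le x$ when applying the harmonic-sum estimate, a point the paper glosses over.
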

		
	\begin{proof}
		We have, via M\"{o}bius inversion,
		\begin{align*}
			\sum_{\substack{1\le n\le x\\\gcd(n,q)=1}}\frac{1}{n}&=\sum_{d|q}\mu(d)\sum_{\substack{1\le n\le x\\d|n}}\frac{1}{n}\\
			&=\sum_{d|q}\frac{\mu(d)}{d}\sum_{1\le k\le \frac{x}{d}}\frac{1}{k}\\
			&=\sum_{d|q}\frac{\mu(d)}{d}\left(\log\Big(\frac{x}{d}\Big)+\gamma+O\Big(\frac{d}{x}\Big)\right)\\
			&=\frac{\varphi(q)}{q}(\log x+\gamma)-\sum_{d|q}\frac{\mu(d)}{d}\log d+O\Big(\frac{\tau(q)}{x}\Big)\\
			&=\frac{\varphi(q)}{q}\left(\log x+\gamma+\sum_{p|q}\frac{\log p}{p-1}\right)+O\Big(\frac{\tau(q)}{x}\Big)\,.
		\end{align*}
		For the estimate of $\sum_{p|q}\frac{\log p}{p-1}$, we invoke Mertens' second theorem \cite{Mertens}, essentially $\sum_{p\le t}\frac{\log p}{p-1}=\log t+O(1) $, where the implied constant is independent of $t$. Let $q_0=\prod_{p|q}p$; then,
		\begin{align*}
			\sum_{p|q}\frac{\log p}{p-1}&=\sum_{p|q_0,\,p\le\log q_0}\frac{\log p}{p-1}+\sum_{p|q_0,\,p>\log q_0}\frac{\log p}{p-1}\\
			&\le\log\log q_0+O(1)+\sum_{p|q_0,\,p>\log q}\frac{\log p}{\log q_0}\\
				&\le\log\log q_0+O(1)+\frac{\log q_0}{\log q_0}\,,
		\end{align*}
		as was to be demonstrated.
	\end{proof}
	\noindent
	A quick corollary to Lemma \ref{Principal L(1,X)} is the following weaker version of Theorem \ref{L(1,chi) Refined}. However, we present an alternative proof using the digamma function, which allows us to circumvent the error term $O\Big(\frac{\tau(q)}{x}\Big)$ above.
	\begin{corollary}
		\label{L(1,chi) Initial}
		For all non-principal Dirichlet characters $\chi$ modulo $q$,
		$$|L(1,\chi)|<\frac{\varphi(q)}{q}\left(\log q+\sum_{p|q}\frac{\log p}{p-1}\right)\,,$$
		In particular, $|L(1,\chi)|\le\left(1+O\Big(\frac{\log\log q}{\log q}\Big)\right)\frac{\varphi(q)}{q}\log q$ and, consequently,
		$$\liminf_{q\to\infty}\max_{\chi\ne\chi_0(\Mod q)}\frac{|L(1,\chi)|}{\log q/\log\log q}\le e^{-\gamma}\,.$$
	\end{corollary}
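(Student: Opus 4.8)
The plan is to bypass partial summation and the P\'olya--Vinogradov inequality entirely, and instead derive a closed formula for $L(1,\chi)$ in terms of the digamma function $\psi=\Gamma'/\Gamma$. Grouping the Dirichlet series into residue classes modulo $q$ gives $L(s,\chi)=q^{-s}\sum_{a=1}^{q}\chi(a)\zeta(s,a/q)$ for $\Re s>1$ (and hence everywhere, by analytic continuation), where $\zeta(s,\alpha)$ is the Hurwitz zeta function; since $\sum_{a=1}^{q}\chi(a)=0$ for non-principal $\chi$, the simple pole of each $\zeta(s,a/q)$ at $s=1$ cancels and, using $\zeta(s,\alpha)=\frac{1}{s-1}-\psi(\alpha)+O(s-1)$, one gets $L(1,\chi)=-\frac{1}{q}\sum_{a=1}^{q}\chi(a)\psi(a/q)$. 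As $\chi(q)=0$ and $\chi(a)=0$ unless $\gcd(a,q)=1$, the sum runs only over the reduced residues $a\in\{1,\dots,q-1\}$.

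Next I would recenter using the vanishing $\sum_{\gcd(a,q)=1}\chi(a)=0$ once more, writing $L(1,\chi)=-\frac{1}{q}\sum_{\gcd(a,q)=1}\chi(a)\bigl(\psi(a/q)+\gamma\bigr)$. The point of adding $\gamma=-\psi(1)$ is that $\psi$ is strictly increasing on $(0,\infty)$, so $\psi(a/q)+\gamma<\psi(1)+\gamma=0$ for every $a$ with $1\le a\le q-1$. The triangle inequality then gives $|L(1,\chi)|\le\frac{1}{q}\sum_{\gcd(a,q)=1}|\psi(a/q)+\gamma|=-\frac{1}{q}\sum_{\gcd(a,q)=1}\bigl(\psi(a/q)+\gamma\bigr)$, and this is \emph{strict} unless the numbers $\chi(a)\bigl(\psi(a/q)+\gamma\bigr)$ all share a common argument; since each factor $\psi(a/q)+\gamma$ is a negative real and $\chi(1)=1$, that forces $\chi(a)=1$ for all reduced residues $a$, i.e.\ $\chi=\chi_0$. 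This accounts for the strict inequality in the statement.

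It remains to evaluate $-\frac{1}{q}\sum_{\gcd(a,q)=1}\psi(a/q)$ in closed form. I would run M\"obius inversion exactly as in the proof of Lemma \ref{Principal L(1,X)}: writing $a=bd$ with $d\mid q$ reduces matters to the sums $\sum_{1\le b\le n-1}\psi(b/n)$ with $n=q/d$, and the Gauss multiplication formula for $\Gamma$ gives $\sum_{1\le b\le n-1}\psi(b/n)=-n\log n-(n-1)\gamma$ (an empty sum, consistently zero, when $n=1$, i.e.\ $d=q$). Substituting and using $\sum_{d\mid q}\mu(d)=0$, $\sum_{d\mid q}\mu(d)/d=\varphi(q)/q$, and the identity $\sum_{d\mid q}\frac{\mu(d)}{d}\log d=-\frac{\varphi(q)}{q}\sum_{p\mid q}\frac{\log p}{p-1}$ already isolated inside the proof of Lemma \ref{Principal L(1,X)}, one obtains $-\frac{1}{q}\sum_{\gcd(a,q)=1}\psi(a/q)=\frac{\varphi(q)}{q}\bigl(\log q+\sum_{p\mid q}\frac{\log p}{p-1}+\gamma\bigr)$; the surplus $\frac{\varphi(q)}{q}\gamma$ is exactly cancelled by the recentering constant, leaving $|L(1,\chi)|<\frac{\varphi(q)}{q}\bigl(\log q+\sum_{p\mid q}\frac{\log p}{p-1}\bigr)$. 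The two consequences are then soft: the second half of Lemma \ref{Principal L(1,X)} bounds $\sum_{p\mid q}\frac{\log p}{p-1}$ by $\log\log q+O(1)$, which yields the $\bigl(1+O(\frac{\log\log q}{\log q})\bigr)\frac{\varphi(q)}{q}\log q$ form; dividing by $\log q/\log\log q$ and testing along the primorials $q_k=\prod_{p\le y_k}p$, Mertens' third theorem gives $\frac{\varphi(q_k)}{q_k}\sim e^{-\gamma}/\log y_k$ while the prime number theorem gives $\log q_k=\vartheta(y_k)\sim y_k$, so $\log\log q_k\sim\log y_k$ and the bound tends to $e^{-\gamma}$.

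I do not anticipate a serious obstacle; the delicate point is the constant bookkeeping in the third step. One must keep the range of $a$ at $\{1,\dots,q-1\}$ rather than $\{1,\dots,q\}$ (so the $d=q$ contribution is vacuous) and verify that, after M\"obius inversion, the $(n-1)\gamma$ pieces from the Gauss formula combine with the $-q\gamma\sum_{d\mid q}\mu(d)/d$ term to leave precisely $+\frac{\varphi(q)}{q}\gamma$, which the recentering then removes. Getting these $\gamma$'s to cancel is exactly what produces the clean inequality, free of the error term $O(\tau(q)/x)$ that appears in Lemma \ref{Principal L(1,X)}.
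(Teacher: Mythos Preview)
Your proposal is correct and follows essentially the same route as the paper: represent $L(1,\chi)$ via the digamma function, recenter by $\gamma$ so that each summand is negative, apply the triangle inequality, and evaluate the resulting sum by M\"obius inversion together with Gauss' multiplication identity, then finish with Lemma~\ref{Principal L(1,X)} and the primorial test. Your write-up is in fact a bit more careful than the paper's in two places\textemdash you justify the strictness of the inequality (via the equality case of the triangle inequality and $\chi(1)=1$) and you track the range $1\le a\le q-1$ explicitly\textemdash while the paper streamlines the M\"obius step by keeping the harmless $k=q$ term (where $-\psi(1)-\gamma=0$) so that Gauss' formula $\sum_{k=1}^{m}\psi(k/m)=-m(\gamma+\log m)$ applies directly without your $(n-1)\gamma$ bookkeeping.
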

	\begin{proof}
		Let $\psi(z)=-\gamma-\sum_{n\ge0}\frac{1-z}{(n+z)(n+1)}$ be the digamma function. Recall that
		\begin{align*}
			L(1,\chi)&=\lim_{s\searrow1}\frac{1}{q^s}\sum_{k=1}^q\chi(k)\sum_{n\ge0}\frac{1}{(n+\frac{k}{q})^s}\\
			&=\lim_{s\searrow1}\frac{1}{q^s}\sum_{k=1}^q\chi(k)\sum_{n\ge0}\left(\frac{1}{(n+\frac{k}{q})^s}-\frac{1}{(n+1)^s}\right)\\
			&=\frac{1}{q}\sum_{k=1}^q\chi(k)\left(-\psi\Big(\frac{k}{q}\Big)-\gamma\right)\,,
		\end{align*}
	where, in the second step, one uses the identity $\sum_{k=1}^q\chi(k)=0$. From Gauss' well-known identity $\sum_{k=1}^m\psi\Big(\frac{k}{m}\Big)=-m(\gamma+\log m)$, we obtain
	\begin{align*}
		|L(1,\chi)|&<\frac{1}{q}\sum_{\substack{1\le k\le q\\\gcd(k,q)=1}}\left(-\psi\Big(\frac{k}{q}\Big)-\gamma\right)\\
		&=\frac{1}{q}\sum_{d|q}\mu(d)\sum_{\substack{1\le k\le q\\d|k}}\left(-\psi\Big(\frac{k}{q}\Big)-\gamma\right)\\
		&=\frac{1}{q}\sum_{d|q}\mu(d)\left(\frac{q}{d}\log\Big(\frac{q}{d}\Big)\right)\\
		&=\frac{\varphi(q)}{q}\log q-\sum_{d|q}\frac{\mu(d)}{d}\log d\\
		&=\frac{\varphi(q)}{q}\left(\log q+\sum_{{\rm prime }\,p|q}\frac{\log p}{p-1}\right)\,.
	\end{align*}
	The inferior limit follows by considering the moduli $q(x)=\prod_{p\le x}p$ as $x\to\infty$ and invoking Mertens' third theorem \cite{Mertens}, viz. $\frac{\varphi(q(x))}{q(x)}\sim\frac{1}{e^\gamma\log x}$ and the prime number theorem, viz. $\log\log q(x)\sim \log x$.
	\end{proof}
	
	\subsection*{Proof of Theorem \ref{L(1,chi) Refined}}
		First, recall that
		\begin{align*}
			L(1,\chi)&=\sum_{1\le n\le x}\frac{\chi(n)}{n}+\lim_{s\searrow1}\sum_{n>x}\left(\frac{1}{n^s}-\frac{1}{(n+1)^s}\right)\sum_{x<k\le n}\chi(k)\\
			&=\sum_{1\le n\le x}\frac{\chi(n)}{n}+\sum_{n>x}\frac{1}{n(n+1)}\sum_{x<k\le n}\chi(k)\,,
		\end{align*}
		from which it follows that
		\begin{equation}
			\label{Eq 1}
			|L(1,\chi)|\le\sum_{\substack{1\le n\le x\\\gcd(n,q)=1}}\frac{1}{n}+\frac{1}{x}\sup_{n>m}\left|\sum_{x<k\le n}\chi(k)\right|\,.
		\end{equation}
		\noindent
		Now, recall P\'{o}lya-Vinogradov's estimate (for instance, \cite[Ch. 9]{Montgomery-Vaughan}),
		$$\left|\sum_{x<k\le n}\chi(k)\right|=O(\sqrt{q}\log q)$$
		for any non-principal Dirichlet character $\chi$ modulo $q$ and $x>0$, and choose $x=\sqrt{q}\log q$. Then from (\ref{Eq 1}) and Lemma \ref{L(1,chi) Refined}, we obtain
	
		\begin{align*}
			|L(1,\chi)|&\le\frac{\varphi(q)}{q}\left(\log(\sqrt{q}\log q)+\gamma+\sum_{p|q}\frac{\log p}{p-1}\right)+O\Big(\frac{\tau(q)}{\sqrt{q}\log q}\Big)+O(1)\\
			&\le\frac{\varphi(q)}{q}\Big(\log(\sqrt{q}\log q)+O(\log\log q)\Big)+O(1)\\
			&=\frac{\varphi(q)}{q}\left(\log(\sqrt{q}\log q)+O(\log\log q)+O\Big(\frac{q}{\varphi(q)}\Big)\right)\\
			&=\frac{\varphi(q)}{q}\left(\frac{1}{2}\log q+O(\log\log q)\right)\,,
		\end{align*}
		where, in the last step, we have used the well-known fact (and, in any case, easy to prove via Mertens' third theorem) that $\frac{q}{\varphi(q)}=O(\log\log q)$. The inferior limit in the theorem then follows by considering the moduli $q:=\prod_{p\le x}p$ as $x\to\infty$ and invoking Mertens' third theorem and the prime number theorem.


\begin{thebibliography}{widestlabel}
		\bibitem{Chowla} S. Chowla, \textit{Bounds for the fundamental unit of a real quadratic field}, Norske Vid. Selsk. Forh. (Trondheim) \textbf{37} (1964), 84‐87
		\bibitem{Burgess} D. A. Burgess,\textit{ Estimating $L_\chi(1)$}, Norske Vid. Selsk. Forh. (Trondheim) \textbf{39} (1967), 101-108..
		\bibitem{Stephens} P. J. Stephens, \textit{Optimizing the size of $L(1,\chi)$}, Proc. London Math. Soc. (3) \textbf{24} (1972), 1–14.
		\bibitem{Pintz}  J. Pintz, \textit{Elementary methods in the theory of L-function}s, Acta Arith. \textbf{33} (1977), 89-98.
		\bibitem{Granville-Soundararajan} A. Granville and K. Soundararajan, \textit{Upper bounds for $L (1,\chi)$}, Quart. J. Math \textbf{53}, (2002), 265‐284.
		\bibitem{Louboutin-1} S. Louboutin, \textit{Majorations explicites de $|L(1,\chi)|$}, C. R. Acad. Sci. Paris \textbf{316}
		(1993), 11‐14.
		\bibitem{Louboutin-2} S. Louboutin, \textit{Majorations explicites de $|L(1,\chi)|$ (suite)}, C. R. Acad. Sci. Paris
		\textbf{323} (1996), 443‐446.
		\bibitem{Platt-Eddin}  D. Platt and S. Saad Eddin: \textit{Explicit upper bounds for $|L(1,\chi)|$ when $\chi(3)=0$},
		J. Colloq. Math., \textbf{133} (2013), 23‐34.
		\bibitem{Ramare-1} O. Ramar\'{e}, \textit{Approximate formulae for $L(1,\chi)$}, Acta Arith. \textbf{100} no. 3, (2001),
		245‐266.
		\bibitem{Ramare-2} O. Ramar\'{e}, \textit{Approximate formulae for $L(1,\chi)$ II}, Acta Arith. \textbf{112} no. 2, (2004),
		ı41‐149.
		\bibitem{Mertens} F. Mertens, \textit{Ein Beitrag zur analytischen Zahlentheorie}, J. Reine Angew. Math. \textbf{78} (1874), 46–62
		\bibitem{Montgomery-Vaughan} H. L. Montgomery and R. C. Vaughan. Multiplicative Number Theory I. Classical Theory. Cambridge University Press, 2007.
	\end{thebibliography}
\end{document}